\newtheorem{thm}{Theorem}%[section]
\newtheorem{thmtool}{Theorem}[section]
\newtheorem{cormain}[thm]{Corollary}
\newtheorem{cor}[thmtool]{Corollary}
\newtheorem{lem}[thmtool]{Lemma}
\newtheorem{prop}[thm]{Proposition}
\newtheorem{clm}[thmtool]{Claim}
\newtheorem*{thm*}{Theorem}
\theoremstyle{definition}
\newtheorem{dfn}[thm]{Definition}
\theoremstyle{remark}
\numberwithin{equation}{section}
\newcommand{\abs}[1]{\left\vert#1\right\vert}
\newcommand{\eps}{\varepsilon}
\renewcommand{\Pr}{}
\let\Pr\relax
\DeclareMathOperator{\Pr}{\mathbb{P}}
\def\squareforqed{\hbox{\rlap{$\sqcap$}$\sqcup$}}
\def\qed{\ifmmode\squareforqed\else{\unskip\nobreak\hfil
\penalty50\hskip1em\null\nobreak\hfil\squareforqed
\parfillskip=0pt\finalhyphendemerits=0\endgraf}\fi}
\begin{document}
\title{Long cycles in subgraphs of (pseudo)random directed graphs}
\author{Ido Ben-Eliezer\thanks{School of Computer Science, Raymond and Beverly Sackler Faculy of Exact Sciences, Tel Aviv University, Tel Aviv 69978, Israel, e-mail: \textbf{idobene@post.tau.ac.il}. Research supported in part by an ERC advanced grant.} \and Michael Krivelevich \thanks{School of Mathematical Sciences, Raymond and Beverly Sackler Faculty of Exact Sciences, Tel Aviv University,
Tel Aviv 69978, Israel, e-mail: \textbf{ krivelev@post.tau.ac.il}.
Research supported in part by USA-Israel BSF grant 2006322 and by grant
1063/08 from the Israel Science Foundation.} \and Benny Sudakov \thanks{Department of Mathematics, UCLA,
Los Angeles, CA 90095. Email: \textbf{bsudakov@math.ucla.edu}.
Research supported in part by NSF CAREER award DMS-0812005 and by a
USA-Israeli BSF grant.}} \maketitle
\begin{abstract}
We study the resilience of random and pseudorandom directed graphs
with respect to the property of having long directed cycles. For
every $0 < \gamma < 1/2$ we find a constant $c=c(\gamma)$ such that
the following holds. Let $G=(V,E)$ be a (pseudo)random directed
graph on $n$ vertices, and let $G'$ be a subgraph of $G$ with
$(1/2+\gamma)|E|$ edges. Then $G'$ contains a directed cycle of
length at least $(c-o(1))n$. Moreover, there is a subgraph $G''$ of
$G$ with $(1/2+\gamma-o(1))|E|$ edges that does not contain a cycle
of length at least $cn$.
\end{abstract}
\section{Introduction}

Given a property $\mathcal P$, a typical problem in extremal graph
theory can be stated as follows. Given a number of vertices $n$,
what is the minimal (or maximal) number $f_{\mathcal P}(n)$ such
that any graph on $n$ vertices with $f(n)$ edges possesses $\mathcal
P$? Many examples of such problems and results can be found, e.g.,
in~\cite{Bollobas04}.

Usually, the property $\mathcal P$ we consider in extremal problems
is either {\em monotone increasing} or {\em monotone decreasing}. A
property $\mathcal P$ is monotone increasing (respectively,
decreasing) if it preserved under edge addition (respectively,
deletion).

The {\em resilience} of a graph $G$ with respect to a property
$\mathcal P$ measures how {\em far} the graph is from any graph $H$
that does not have $\mathcal P$. In particular, the study of
resilience usually focuses on monotone properties, and the following
two types of problems are studied.

\paragraph{Global Resilience.} Given a monotone increasing  property $\mathcal
P$, the global resilience of $G$ respect to $\mathcal P$ is the
maximal integer $R$ such that for every subset $E_0\subseteq E(G)$ of $|E_0|=R$ edges,
the graph $G-E_0$ still possesses $\mathcal P$. For the case of a monotone decreasing property $\mathcal P$,
the global resilience is defined as the maximum number $R$ such that the addition of any subset of $R$
edges to $G$ still results in a graph $G'\in {\mathcal P}$.

One can also define the notion of {\bf local resilience} of a graph with respect to, say, a monotone increasing
property $\mathcal P$ as the maximum number $r$ such that for any subgraph $H\subseteq G$ of maximum degree
$r$, the graph $G-H$ is still in $\mathcal P$. Since in this paper we will be concerned with properties related to global
resilience, we will not dwell on the notion of local resilience anymore.

\medskip

The explicit study of this notion was initiated by Sudakov and
Vu~\cite{SV08}, but in a sense many well known theorems in extremal
graph theory can be stated using this terminology. For example,
given a fixed graph $H$, the  Tur\'{a}n number of $H$, denoted by
$ex(H,n)$, is the minimum number $m$ such that any graph on $n$
vertices with $m$ edges contains a copy of $H$. Clearly, the study
of Tur\'{a}n numbers is equivalent to the study of the global
resilience of the complete graph $K_n$ with respect to the property
of having a copy of $H$.

Woodall~\cite{Woodall72} gave tight bounds for the number of edges
in an undirected graph that guarantees the existence of a cycle of
length at least $\ell$. In our terminology, he gave tight bounds on
the global resilience of $K_n$ with respect to the property of
having a cycle of length at least $\ell$. We will discuss Woodall's
result later and will also use his result in our work.
Lewin~\cite{Lewin75} studied the analogous problem for directed
graphs, and he gave tight bounds on the number of edges required for
having a directed cycle of length at least $\ell$. Many extremal
results regarding the existence of cycles in directed graphs can be
found, e.g., in~\cite{BT81}.

Recently, there has been an extensive line of works that studying
the resilience of graphs with respect to different properties.
Dellamonica et al.~\cite{DKMS08} studied the local and global
resilience of long cycles in pseudorandom undirected graphs.
Krivelevich et al.~\cite{KLS10} studied the resilience with respect
to pancyclicity (having cycle of every possible length). Ben-Shimon
et al.~\cite{BSKS10} studied the resilience of several graph
properties in random regular graphs. Alon and Sudakov~\cite{AS10}
studied the resilience of the chromatic number in random graphs.
Huang et al.~\cite{HLS10} studied the resilience with respect to
having a spanning graph $H$ as a subgraph, answering a question of
B\"{o}ttcher et al.~\cite{BKT10}. Balogh et al. \cite{BCS} studied
the resilience of random and pseudorandom graphs with respect to
containing a copy of a given nearly spanning tree of bounded maximum
degree.

Here we study the resilience of pseudorandom (and hence, of random)
directed graphs with respect to the property of having a long
directed cycle (namely, a simple directed cycle that covers a
constant fraction of the vertices). We prove asymptotically tight
bounds, and thus provide the asymptotic value of the resilience of
every graph with respect to this property, assuming it has some
predefined pseudorandomness property. Our proof applies a variant of
the celebrated Szemer\'{e}di's regularity lemma for sparse directed
graphs, and a short and simple technique for finding a long directed
path in pseudorandom directed graphs. Using these techniques we can
reduce our problem to the case of undirected graphs, where by
applying techniques of~\cite{DKMS08} we can give tight bounds.

\subsection{The models}
We consider here directed graphs on $n$ vertices, where antiparallel
edges are allowed. We say that a graph $D=(V,E)$ has density $p$ if
$|E| = pn^2$.

Let $D(n,p)$ be the following natural distribution of random
directed graphs. Every graph in the support of $D(n,p)$ contains $n$
vertices, and for every two distinct vertices $x,y$, there is an
edge from $x$ to $y$ with probability $p$, and independently there
is an edge from $y$ to $x$ with probability $p$. Clearly, the
expected number of edges is $2p{n \choose 2}$.

Once we define our random digraph model, it is usually desirable to
define a pseudorandom analog. That is, we would like to define a
property such that graphs with this property have many of the 'nice'
properties of random graphs. Roughly speaking, we say that a
directed graph is pseudorandom if the number of edges between every
two large enough sets is close to the expected number of edges in a
random directed graph with the same density. More formally, we say
that a directed graph $G$ is $(p,r)$-\textit{pseudorandom} if it has
edge density $p$ and for every two disjoint sets $A,B \subseteq
V(G)$, $|A|=|B|$, the number of edges from $A$ to $B$, denoted by
$e_G(A,B)$, satisfies
$$ \abs{e_G(A,B)-p|A||B|} \leq r|A|\sqrt{pn}.$$

This is (up to normalization) a directed variant of the well known notion of {\em jumbled
graphs}, that was introduced By Thomason~\cite{Thomason85}. In his
celebrated work, Thomason essentially proved that a graph
distributed as $G(n,p)$  is
$(p,O(1))$-pseudorandom with high probability.\footnote{Here a
sequence of events $A_n$, $n \geq 1$ is said to occur with high
probability  if $\lim_{n \rightarrow \infty} \Pr[A_n] = 1.$ } On
the other hand, there is no infinite sequence of
$(p,o(1))$-pseudorandom graphs.

The following lemma can be easily verified by combining a Chernoff
type bound with the union bound.
\begin{lem}
\label{lem:random-graph-is-uniform} For every constant $c>0$ there
is a constant $C>0$ such that for $p \geq \frac{C}{n}$, a random
directed graph $G \in D(n,p)$ is $(p,c)$-pseudorandom with high
probability.
\end{lem}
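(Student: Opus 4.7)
The plan is the standard Chernoff plus union bound argument. There are two things to check: the density condition $|E| = pn^2$ (up to $o(1)$), and the discrepancy bound $|e_G(A,B) - p|A||B|| \leq c|A|\sqrt{pn}$ for every pair of disjoint equal-sized sets.

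For the density, the total edge count in $G \in D(n, p)$ is a $\mathrm{Bin}(n(n-1), p)$ random variable with mean $(1 - o(1))pn^2$, so a Chernoff bound gives $|E| = (1 + o(1))pn^2$ with high probability. For the discrepancy, fix disjoint $A, B \subseteq V(G)$ with $|A| = |B| = k$. Since edges are independent across ordered pairs of distinct vertices, $e_G(A, B)$ is distributed as $\mathrm{Bin}(k^2, p)$ with mean $\mu = pk^2$, and Chernoff gives
$$\Pr\!\left[\,|e_G(A,B) - pk^2| \geq ck\sqrt{pn}\,\right] \;\leq\; 2\exp\bigl(-\Omega(c^2 n)\bigr).$$
In the small-$k$ range $k \leq c\sqrt{pn}$ the bound actually holds deterministically: both $e_G(A,B)$ and $\mu$ lie in $[0, k^2]$ and $k^2 \leq ck\sqrt{pn}$ there, so no probabilistic estimate is needed. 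Taking a union bound over $k \in \{1, \dots, \lfloor n/2 \rfloor\}$ and over all $\binom{n}{k}\binom{n-k}{k} \leq \binom{n}{k}^2$ disjoint pairs of size $k$, the total failure probability is at most
$$\sum_k \binom{n}{k}^2 \cdot 2\exp(-\Omega(c^2 n)) \;\leq\; \exp\bigl(O(n) - \Omega(c^2 n)\bigr),$$
which tends to $0$ provided $C = C(c)$ in the hypothesis $p \geq C/n$ is chosen sufficiently large.

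The only technical point is to calibrate $C$ so that the Chernoff exponent beats the $\exp(O(n))$ coming from the union bound uniformly in $k$. The deterministic treatment of the small-$k$ range removes the regime where Chernoff would otherwise only give a polynomial tail, and in the complementary range $k > c\sqrt{pn}$ the standard form of the multiplicative Chernoff estimate applies. No further ideas are required beyond routine concentration.
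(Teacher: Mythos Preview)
Your proposal is exactly the approach the paper indicates: the paper does not give a detailed argument but simply states that the lemma ``can be easily verified by combining a Chernoff type bound with the union bound.'' Your write-up is a faithful expansion of that sentence, including the natural observation that for $k\le c\sqrt{pn}$ the discrepancy bound holds deterministically; there is nothing further to compare.
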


Our results in this work will hold for every $(p,r)$-pseudorandom
graph with $p \geq \frac{C}{n}$ for some sufficiently large constant
$C$, and every $r \leq \mu \sqrt{pn}$ for some small constant $\mu
>0$ that will be chosen later. By Lemma~\ref{lem:random-graph-is-uniform}, a random directed
graph distributed according to $D(n,p)$ with $p \geq \frac{C}{n}$
has this property with high probability.

We show here that the directed case is both similar and different
from the undirected case. In fact, since we reduce here the directed
case to the global resilience problem of the undirected case, we can
use ideas from Dellamonica et al.~\cite{DKMS08} in order to get our
bounds on the resilience for directed graphs. On the other hand,
many of the techniques that were used for the undirected case cannot
be applied in the directed case. Also, the range of parameters
relevant to us is rather different, since in particular the result
of Dellamonica et al.~\cite{DKMS08} shows that the removal of any
$0.99$-fraction of the edges of a (pseudo)random undirected graph
still leaves a cycle of linear size. For the directed case it is
easy to see that one can always remove half of the edges of any
directed graph and get an acyclic directed graph, and hence a graph
with no cycles at all.

\subsection{Our results}
\label{subsection:our-results}

Woodall~\cite{Woodall72} studied the minimal number of edges that
guarantees the existence of a long cycle. In our terminology, he studied the
global resilience of the complete graph $K_n$ with respect to the
property of having a cycle of length at least $\ell$. He proved the
following.

\begin{thm}[Woodall~\cite{Woodall72}] Let $3 \leq \ell \leq n$. Every
graph $G$ on $n$ vertices satisfying $$e(G) \geq \left \lceil
\frac{n-1}{\ell-2} \right \rceil \cdot {\ell-1 \choose 2}+{r+1
\choose 2}+1,$$ where $r = (n-1) \mod{(\ell-2)}$, has a cycle of
length at least $\ell$.
\end{thm}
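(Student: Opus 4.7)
The plan is to combine an extremal construction (which both establishes tightness and guides the proof) with an inductive argument on $n$ based on the block decomposition of $G$, ultimately reducing to a longest-path rotation analysis in $2$-connected graphs.

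First I would exhibit the extremal ``bouquet'' construction: take $\lfloor (n-1)/(\ell-2)\rfloor$ copies of $K_{\ell-1}$ together with one copy of $K_{r+1}$ (when $r>0$), all sharing a single common vertex. Every cycle of this graph lies entirely inside one block and hence has length at most $\ell-1$. A direct count matches the stated threshold up to the $+1$, motivating why the bound cannot be lowered and pinpointing what an ``edge in excess'' must do: it has to create a chord between two previously separated blocks.

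For the forward direction I would induct on $n$ and reduce to the case that $G$ is $2$-connected. If $G$ has a cut vertex $v$, decompose $G=G_1\cup G_2$ with $V(G_1)\cap V(G_2)=\{v\}$; since $e(G)=e(G_1)+e(G_2)$ while $|V(G_1)|+|V(G_2)|=n+1$, the arithmetic of the threshold function $f(m):=\lceil (m-1)/(\ell-2)\rceil\binom{\ell-1}{2}+\binom{r(m)+1}{2}+1$ forces at least one $G_i$ to satisfy $e(G_i)\geq f(|V(G_i)|)$, and the inductive hypothesis supplies a cycle of length $\geq\ell$ inside that component. The arithmetic check here reduces to the sub-additivity of $f$ relative to the splitting $n\mapsto (n_1,n_2)$ with $n_1+n_2=n+1$.

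With $G$ $2$-connected, I would apply a Pósa-type rotation-extension argument on a longest path $P=v_0v_1\cdots v_k$. Maximality forces $N(v_0)\cup N(v_k)\subseteq V(P)$, and $2$-connectivity allows one to ``rotate'' the endpoints to access many other vertices of $P$. If $v_0v_k$ is an edge, the cycle through $P$ has length $k+1$; combined with a short $(V(G)\setminus V(P))$-detour provided by $2$-connectivity, one obtains a longer cycle unless $k<\ell-1$. In the remaining range one extracts a sharp upper bound on $\deg(v_0)+\deg(v_k)$ and, iterating over all rotated pairs of endpoints, translates the absence of a cycle of length $\geq\ell$ into a global edge upper bound matching the extremal construction.

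The main obstacle will be pinning down the sharp remainder term $\binom{r+1}{2}+1$, which is finer than the soft Erd\H{o}s--Gallai bound $\tfrac{1}{2}(\ell-1)(n-1)$: one has to argue that a single edge beyond the ``bouquet'' count already activates a rotation producing a cycle of length $\geq\ell$, which is where Woodall's original argument is at its most delicate and where the careful accounting of the last, incomplete clique is required.
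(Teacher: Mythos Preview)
The paper does not actually prove Woodall's theorem; it is quoted as a known result from \cite{Woodall72} and only the tightness of the bound is justified in the text, via the very ``bouquet'' construction you describe (cliques $K_{\ell-1}$ plus a smaller $K_{r+1}$ glued at a common vertex). So your proposal is not comparable to the paper's treatment---you are sketching a full proof where the paper simply cites one.

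As a self-contained argument your outline is broadly the right shape (block decomposition reducing to the $2$-connected case, followed by a longest-path/rotation analysis), and in particular your extremal example agrees exactly with the paper's. The point you flag as delicate---getting the exact remainder term $\binom{r+1}{2}+1$ rather than just the Erd\H{o}s--Gallai bound---is indeed the crux; the sub-additivity step you invoke, namely that $f(n_1)+f(n_2)\le f(n)+1$ whenever $n_1+n_2=n+1$, must be checked carefully because of how the residue $r=(n-1)\bmod(\ell-2)$ interacts with the split, and your sketch does not yet supply that calculation. Likewise, in the $2$-connected endgame you should be aware that a pure P\'osa rotation bound on $\deg(v_0)+\deg(v_k)$ will only recover the Erd\H{o}s--Gallai density $\tfrac12(\ell-1)(n-1)$; Woodall's sharper threshold requires a more global structural argument (in the original paper, an analysis of how many vertices can lie on long paths through a fixed edge), so the last paragraph of your plan, as written, would not close the gap.
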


It is easy to verify that Woodall's bound is best possible. Indeed,
take a graph formed by $\lceil \frac{n-1}{\ell -2} \rceil$ disjoint
cliques of size $\ell-2$, a single smaller clique of size $r$ and a
vertex that is connected to every other vertex in the graph.
Clearly, the length of the longest cycle in this graph is at most
$\ell-1$.

The work of Dellamonica et al.~\cite{DKMS08} can be viewed as a
generalization of Woodall's work from the case of $K_n$ to the case
of general pseudorandom graphs. In order to cite their result and
also for future reference in our paper the following function is
defined.

\begin{dfn}
For a given $0 \leq \alpha < 1$, define $$w(\alpha) =
1-(1-\alpha)\lfloor (1-\alpha)^{-1} \rfloor.$$ \label{dfn:w(alpha)}
\end{dfn}
It is easy to verify that we have $w(0) = 0$ and $\lim_{x \nearrow
1} w(x) = 0$. The following asymptotic version of Woodall's result
is proved in~\cite{DKMS08} and is easier to work with.

\begin{thm}[~\cite{DKMS08}]\label{th3}
\label{thm:Woodall-new} Let $\alpha>0$. For every $\beta>0$ there is
$n_0$ such that for every graph $G$ on $n>n_0$ vertices satisfying
 $$|E(G)| \geq {n \choose 2} \cdot \Big(1-(1-w(\alpha))(\alpha+w(\alpha))+\beta
 \Big)$$
has a cycle of length at least $(1-\alpha) \cdot n$.
\end{thm}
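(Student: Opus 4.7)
The plan is to deduce Theorem~\ref{thm:Woodall-new} directly from Woodall's (non-asymptotic) theorem by choosing the target length $\ell$ appropriately and estimating the extremal edge count. Set $\ell = \lceil(1-\alpha)n\rceil$, so that any cycle of length $\geq \ell$ has length $\geq (1-\alpha)n$. Let $s = \ell - 2$, and write $n - 1 = Ks + r$ with $0 \leq r < s$; then Woodall's threshold is
$$B := \left\lceil\frac{n-1}{s}\right\rceil\binom{\ell-1}{2} + \binom{r+1}{2}.$$
The task reduces to showing that, for $n$ large, $B \leq \binom{n}{2}\bigl(1-(1-w(\alpha))(\alpha+w(\alpha))+\beta\bigr)$, since then any graph meeting the hypothesis of Theorem~\ref{thm:Woodall-new} exceeds $B$ and hence contains a cycle of length at least $\ell$ by Woodall's theorem.

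Next I would compute the asymptotics as $n\to\infty$. Since $s/n \to 1-\alpha$, we have $K \to \lfloor 1/(1-\alpha)\rfloor$, and the definition of $w(\alpha)$ gives the key identity $K(1-\alpha)= 1-w(\alpha)$. Consequently $r/n = (n-1-Ks)/n \to 1-K(1-\alpha) = w(\alpha)$. Substituting, and noting that $\lceil (n-1)/s\rceil$ agrees with $K$ up to an additive $O(1)$ error which only contributes $O(n)$ to $B$,
$$\frac{B}{\binom{n}{2}} = K\,\frac{(\ell-1)(\ell-2)}{n(n-1)} + \frac{(r+1)r}{n(n-1)} + o(1) = K(1-\alpha)^2 + w(\alpha)^2 + o(1).$$
A short algebraic simplification then identifies this limit with the constant in the theorem:
$$K(1-\alpha)^2 + w(\alpha)^2 = (1-w(\alpha))(1-\alpha) + w(\alpha)^2 = 1-(1-w(\alpha))(\alpha+w(\alpha)).$$
Choosing $n_0$ so large that the $o(1)$ term is below $\beta$, the hypothesis $|E(G)| \geq \binom{n}{2}(1-(1-w(\alpha))(\alpha+w(\alpha))+\beta)$ exceeds $B$, and Woodall's theorem delivers the desired cycle.

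The main obstacle is bookkeeping at the boundary: when $1/(1-\alpha)$ is an integer, $w(\alpha)=0$ and $r$ may be nearly equal to $s$ rather than small, and the $\lceil\cdot\rceil$ versus $\lfloor\cdot\rfloor$ discrepancy between Woodall's formula and the natural extremal construction (disjoint copies of $K_{\ell-1}$ sharing a single vertex) shifts $B$ by an additive $O(\ell^2) = O(n)$. Both effects are of lower order than the $\beta\binom{n}{2}$ slack and are therefore harmless; the reader essentially needs to check that the correction terms do not infect the leading constant, which is the content of the identity $K(1-\alpha) = 1-w(\alpha)$.
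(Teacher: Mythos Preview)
The paper does not give its own proof of this statement; it is quoted from \cite{DKMS08} as an asymptotic reformulation of Woodall's exact theorem. Your derivation --- set $\ell=\lceil(1-\alpha)n\rceil$, compute the limit of Woodall's threshold divided by $\binom{n}{2}$, and identify that limit with $1-(1-w(\alpha))(\alpha+w(\alpha))$ via the identity $\lfloor 1/(1-\alpha)\rfloor(1-\alpha)=1-w(\alpha)$ --- is exactly the intended route, and the algebraic identification $K(1-\alpha)^2+w(\alpha)^2=1-(1-w(\alpha))(\alpha+w(\alpha))$ is correct.

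One genuine slip, however: you write that the $\lceil\cdot\rceil$ versus $\lfloor\cdot\rfloor$ discrepancy shifts $B$ by ``$O(\ell^2)=O(n)$''. Since $\ell\sim(1-\alpha)n$, that shift is $\Theta(n^2)$, the same order as $\binom{n}{2}$, and therefore cannot be absorbed into the $\beta\binom{n}{2}$ slack. The fix is not to bound the discrepancy but to remove it: the $\lceil\cdot\rceil$ in the paper's statement of Woodall's theorem is a typo for $\lfloor\cdot\rfloor$. The extremal construction consists of $\lfloor(n-1)/(\ell-2)\rfloor$ copies of $K_{\ell-1}$ through a common vertex together with one $K_{r+1}$ through that vertex (with the ceiling the vertex count would exceed $n$ whenever $r>0$), and its edge count is exactly $K\binom{\ell-1}{2}+\binom{r+1}{2}$ with $K=\lfloor(n-1)/(\ell-2)\rfloor$. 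With this corrected threshold your asymptotic computation goes through verbatim and there is no discrepancy to manage. Your worry about the boundary case $1/(1-\alpha)\in\mathbb{Z}$ is also unnecessary: since $\ell\ge(1-\alpha)n$ and $\alpha>0$ one checks that $(n-1)/(\ell-2)>1/(1-\alpha)$ for large $n$, so $K=1/(1-\alpha)$ eventually and $r=O(1)$, consistent with $w(\alpha)=0$.
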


Dellamonica et al. proved in \cite{DKMS08} that Theorem \ref{th3}
can be extended to (sparse) pseudorandom graphs; more specifically
they proved that any subgraph $G'=(V,E')$ of a pseudorandom graph
$G=(V,E)$ with $|E'|\ge (1-(1-w(\alpha))(\alpha+w(\alpha))+o(1))|E|$
edges has a cycle of length at least $(1-\alpha) \cdot |V|$. Here we
obtain a directed analogue of their result.

We can now state our main theorem.

\begin{thm}\label{th4}
Fix $0<\gamma<\tfrac{1}{2}$ and let $G=(V,E)$ be a
$(p,r)$-pseudorandom directed graph on $n$ vertices, where $r \leq
\mu \sqrt{np}$ and $\mu(\gamma)>0$ is a sufficiently small constant
that depends only on $\gamma$ and $n$ is sufficiently large. Let
$G'$ be the subgraph of $G$ with at least $(\frac{1}{2}+\gamma)|E|$
edges. Then $G'$ contains a directed cycle of length at least
$(1-\alpha-o(1))\cdot n$, where $\alpha$ satisfies $$2\gamma = 1 -
(1-w(\alpha))(\alpha+w(\alpha)) .$$ \label{thm:main-theorem}
\end{thm}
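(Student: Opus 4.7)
The strategy is to reduce the directed cycle problem to the undirected setting of Theorem~\ref{th3}, via three ingredients: a sparse directed version of Szemer\'edi's regularity lemma, a definition of an undirected reduced graph that forces bidirectional density on each of its edges, and a simple technique for finding long directed paths in pseudorandom directed graphs that lets us lift the reduced cycle back to $G'$.

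First, apply a sparse directed regularity lemma to $G$ with respect to the subgraph $G'$. This yields an equipartition $V(G)=V_0\cup V_1\cup\ldots\cup V_k$ with $|V_0|=o(n)$ and $|V_i|=m$ for $i\geq 1$, where all but $o(k^2)$ ordered pairs $(V_i,V_j)$ are $\eps$-regular with respect to $G'$. Set $p_{ij}=e_{G'}(V_i,V_j)/(pm^2)$ for each such regular ordered pair. The pseudorandomness of $G$, with $\mu$ chosen sufficiently small relative to $1/k$, yields $p_{ij}\leq 1+o(1)$, while $|E(G')|\geq(\frac{1}{2}+\gamma)|E(G)|$ gives $\sum_{i<j}(p_{ij}+p_{ji})\geq(1+2\gamma-o(1))\binom{k}{2}$.

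Fix a small $\delta=\delta(\gamma)>0$ and let $R$ be the reduced undirected graph on $[k]$ in which $\{i,j\}\in E(R)$ iff both $(V_i,V_j),(V_j,V_i)$ are $\eps$-regular and $\min(p_{ij},p_{ji})\geq\delta$. For any non-edge of $R$, one direction has density below $\delta$ and the other is at most $1+o(1)$, so $p_{ij}+p_{ji}\leq 1+\delta+o(1)$; combined with the sum bound this forces $|E(R)|\geq\frac{2\gamma-\delta-o(1)}{1-\delta}\binom{k}{2}$, which tends to $2\gamma\binom{k}{2}$ as $\delta\to 0$. Using continuity of $\alpha\mapsto 1-(1-w(\alpha))(\alpha+w(\alpha))$, choose $\delta$ small enough to apply Theorem~\ref{th3} to $R$ with a parameter $\alpha'=\alpha+o(1)$; this produces an undirected cycle $C=u_1u_2\ldots u_\ell u_1$ in $R$ of length $\ell\geq(1-\alpha-o(1))k$.

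Finally, lift $C$ to a long directed cycle in $G'$. Orient $C$ arbitrarily as $u_1\to u_2\to\ldots\to u_\ell\to u_1$; the symmetric threshold in $R$ guarantees $p_{u_iu_{i+1}}\geq\delta$ for every chosen direction. Inside each cluster $V_{u_i}$, apply the simple technique for finding long directed paths in a pseudorandom directed graph to build a directed path on $(1-o(1))m$ vertices, with flexibility in the choice of endpoints; $\eps$-regularity of $(V_{u_i},V_{u_{i+1}})$ together with $p_{u_iu_{i+1}}\geq\delta$ implies that almost all vertices of $V_{u_i}$ have $\Omega(\delta pm)$ out-neighbours in $V_{u_{i+1}}$, which suffices to adjust endpoints of consecutive internal paths and glue them by single cross-edges, yielding a directed cycle of length $(1-o(1))\ell m=(1-\alpha-o(1))n$. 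The main obstacle is the balance in defining $R$: thresholding on $\min(p_{ij},p_{ji})$ rather than on $p_{ij}+p_{ji}$ is essential so that any orientation of the reduced cycle is realizable in $G'$, but it costs an $O(\delta)$ fraction of reduced edges, a loss absorbed through continuity of the Woodall exponent; coordinating endpoints of the $\ell$ internal paths via the pseudorandomness of $G$ descending to each $V_{u_i}$ is the remaining technical heart of the argument.
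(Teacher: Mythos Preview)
Your reduction to the undirected reduced graph $R$ and the application of Theorem~\ref{thm:Woodall-new} are essentially the same as in the paper, and your edge-counting for $R$ (thresholding on $\min(p_{ij},p_{ji})$) is correct. The gap is in the lifting step.

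You propose to build, \emph{inside each cluster} $V_{u_i}$, a directed path on $(1-o(1))m$ vertices using ``the pseudorandomness of $G$ descending to each $V_{u_i}$'', and then glue these internal paths by single cross-edges. But the cycle must lie in $G'$, not in $G$, and nothing in your setup controls $G'[V_{u_i}]$. Regularity and the definition of $R$ only certify the \emph{bipartite} edge distribution of $G'$ between pairs of clusters; they say nothing about edges of $G'$ inside a cluster. In fact the total number of edges of $G$ inside all clusters is $O(pn^2/k)=o(|E|)$, so the adversary can delete every internal edge and still retain $(\tfrac12+\gamma)|E|$ edges. In that case $G'[V_{u_i}]$ is empty for every $i$ and your lifting fails outright. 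Invoking pseudorandomness of $G$ on $V_{u_i}$ does not help, since a long path in $G[V_{u_i}]$ is not a path in $G'$.

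The paper circumvents this by never using intra-cluster edges. It pairs up consecutive clusters along the reduced cycle, $V_{i_{2q-1}}$ with $V_{i_{2q}}$, and uses Corollary~\ref{cor:regular-pair-has-long-path} (DFS in a bipartite digraph where any two $\delta t$-sets are joined in both directions) to find a path of length $(1-2\delta)\cdot 2t$ \emph{alternating between} the two clusters; this uses only bipartite edges of $G'$, which are exactly what $R$ certifies. Consecutive such paths are then spliced via Claim~\ref{clm:edge-between-small-sets} applied to the regular pair between $V_{i_{2q}}$ and $V_{i_{2q+1}}$, again using only certified bipartite edges. Replacing your intra-cluster paths with these inter-cluster alternating paths repairs the argument.
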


Observe crucially that every directed graph $G=(V,E)$ contains an
acyclic subgraph $G'$ with at least $|E|/2$ edges. Indeed, fix a
permutation $\sigma: V \to V$, and let $G_1$ be the subgraph with
all edges $xy$ such that $\sigma(x)>\sigma(y)$, and $G_2$ be the
subgraph with all edges $xy$ such that $\sigma(x)<\sigma(y)$. Then
both $G_1$ and $G_2$ are acyclic, and at least one of them contains
at least half of the edges of $G$.

Theorem \ref{th4} yields the following two immediate corollaries.

\begin{cormain}
For every $\gamma>0$ there is a constant $c_1(\gamma)>0$ such that
the following holds. Let $G$ be a $(p,r)$-pseudorandom graph on $n$
vertices, $r \leq \mu \sqrt{np}$ where $\mu(\gamma)>0$ is some
sufficiently small constant that depends only on $\gamma$ and $n$ is
sufficiently large. Let $G'$ be a subgraph of $G$ with at least
$(1/2+\gamma)|E(G)|$ edges. Then $G'$ contains a directed cycle of
length at least $c_1 n$.
\end{cormain}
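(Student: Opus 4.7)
The plan is to deduce this corollary as an immediate quantitative consequence of Theorem \ref{th4}: the only additional work required is to confirm that the implicit parameter $\alpha = \alpha(\gamma)$ produced by that theorem is strictly smaller than $1$, so that the factor $(1 - \alpha - o(1))$ in the cycle-length bound is a positive constant depending only on $\gamma$.

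Write $F(\alpha) := 1 - (1 - w(\alpha))(\alpha + w(\alpha))$ for the function appearing in the defining equation of Theorem \ref{th4}, so that $\alpha(\gamma)$ is characterized by $F(\alpha(\gamma)) = 2\gamma$. Using the two observations recorded just after Definition \ref{dfn:w(alpha)}---namely $w(0) = 0$ and $\lim_{x \nearrow 1} w(x) = 0$---we obtain
\[
F(0) = 1 - (1)(0) = 1, \qquad \lim_{\alpha \nearrow 1} F(\alpha) = 1 - (1)(1) = 0.
\]
Although $w$ itself has downward jumps at the points $\alpha = k/(k+1)$ for $k \geq 1$, a short calculation shows that these jumps cancel inside the expression for $F$ (indeed $F(k/(k+1)) = 1/(k+1)$ whether approached from the left or evaluated at the point), so $F$ is in fact continuous on $[0, 1)$. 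Consequently, for every $\gamma \in (0, 1/2)$ the equation $F(\alpha) = 2\gamma$ admits a solution $\alpha(\gamma) \in [0, 1)$.

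Applying Theorem \ref{th4} to $G'$ now yields a directed cycle of length at least $(1 - \alpha(\gamma) - o(1))n$. Setting $c_1(\gamma) := (1 - \alpha(\gamma))/2 > 0$ absorbs the $o(1)$ error for all sufficiently large $n$, and the cycle has length at least $c_1(\gamma) n$, as required. The argument presents no genuine obstacle beyond invoking the main theorem: the entire substantive content---the variant of Szemer\'edi's regularity lemma for sparse directed graphs, the construction of a long directed path via the pseudorandomness hypothesis, and the reduction to the undirected resilience result of Dellamonica et al.---is already packaged inside Theorem \ref{th4}.
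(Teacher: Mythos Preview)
Your proposal is correct and follows exactly the approach the paper intends: the paper states this corollary as an immediate consequence of Theorem~\ref{th4} without further proof, and your additional verification that the equation $F(\alpha)=2\gamma$ has a solution $\alpha(\gamma)\in[0,1)$ is precisely the one detail needed to make ``immediate'' rigorous. Your continuity computation for $F$ at the jump points of $w$ is correct and more explicit than anything in the paper.
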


In words, the above corollary guarantees that the deletion of less
than half of the edges of a pseudorandom digraph leaves a cycle of
linear length.

\begin{cormain}
There exists a function $c_2(\epsilon)$ with $\lim_{\epsilon\to
0}c_2(\epsilon)=0$ such that the following holds. Let $G$ be a
$(p,r)$-pseudorandom graph on $n$ vertices, $r \leq \mu \sqrt{np}$
where $\mu(\gamma)>0$ is some sufficiently small constant that
depends only on $\gamma$ and $n$ is sufficiently large. Let $G'$ be
a subgraph of $G$ with at least $(1-\eps)|E(G)|$ edges. Then $G'$
contains a directed cycle of length at least $(1-c_2) \cdot n$.
\end{cormain}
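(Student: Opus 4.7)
The plan is to derive the corollary directly from Theorem~\ref{th4} by a suitable choice of parameter. Given small $\epsilon>0$, I would set $\gamma=\tfrac{1}{2}-\epsilon$; then the hypothesis $|E(G')|\ge(1-\epsilon)|E(G)|$ reads $|E(G')|\ge(\tfrac{1}{2}+\gamma)|E(G)|$, which is exactly the hypothesis of Theorem~\ref{th4}. Applying that theorem yields a directed cycle of length at least $(1-\alpha-o(1))n$, where $\alpha$ is determined by
$$(1-w(\alpha))(\alpha+w(\alpha))=2\epsilon.$$

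The remaining task is to verify that the resulting $\alpha$ tends to $0$ as $\epsilon\to 0$, so that one may take $c_2(\epsilon):=\alpha+o(1)$. For this I would use the explicit form of $w$ from Definition~\ref{dfn:w(alpha)} on $[0,\tfrac{1}{2})$: for $\alpha$ in this range we have $(1-\alpha)^{-1}\in[1,2)$, so $\lfloor(1-\alpha)^{-1}\rfloor=1$ and hence $w(\alpha)=1-(1-\alpha)=\alpha$. Substituting into the defining equation gives $2\alpha(1-\alpha)=2\epsilon$, whose smallest nonnegative root is
$$\alpha \;=\; \frac{1-\sqrt{1-4\epsilon}}{2} \;=\; \epsilon+O(\epsilon^{2}),$$
which certainly tends to $0$ as $\epsilon\to 0$.

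Setting $c_2(\epsilon):=(1-\sqrt{1-4\epsilon})/2+o(1)$ and applying Theorem~\ref{th4} with $\gamma=\tfrac{1}{2}-\epsilon$ (and the associated constant $\mu=\mu(\gamma)$ playing the role of the pseudorandomness parameter in the corollary) then produces a directed cycle of length at least $(1-c_2(\epsilon))n$ in $G'$, which is the desired conclusion. There is no genuine obstacle here; Theorem~\ref{th4} does all of the heavy lifting, and the only point to check is the routine inversion of $\alpha\mapsto(1-w(\alpha))(\alpha+w(\alpha))$ near the origin, which is immediate from the piecewise formula $w(\alpha)=\alpha$ on $[0,\tfrac{1}{2})$.
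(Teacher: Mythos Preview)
Your proposal is correct and is exactly the intended route: the paper states this corollary as an immediate consequence of Theorem~\ref{th4} with no separate proof, and your derivation supplies precisely the missing details---setting $\gamma=\tfrac12-\epsilon$, noting that $w(\alpha)=\alpha$ on $[0,\tfrac12)$, and inverting $2\alpha(1-\alpha)=2\epsilon$ to see $\alpha\to 0$. The only cosmetic point is that $c_2(\epsilon)$ should be defined as a function of $\epsilon$ alone (e.g.\ $c_2(\epsilon)=2\alpha(\epsilon)$ or $\alpha(\epsilon)+\epsilon$) rather than incorporating the $o(1)$ term, which is then absorbed by taking $n$ large.
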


Here, we have that deleting a negligible fraction of the edges of a
pseudorandom digraph leaves a cycle of length close to $n$.

Finally, we prove the following matching lower bound.

\begin{prop}
Fix $0<\gamma<\tfrac{1}{2}$ and let $G$ be a $(p,r)$-pseudorandom
directed graph on $n$ vertices, where $r = O(\sqrt{np})$ and $pn \to
\infty$. There is a subgraph $G'$ with $(\frac{1}{2}+\gamma)|E|$
edges that does not contain any directed cycle of length
$(1-\alpha+o(1))\cdot n$, where $\alpha$ satisfies
$$2\gamma = 1 - (1-w(\alpha))(\alpha+w(\alpha)) .$$
\label{prop:lower-bound}
\end{prop}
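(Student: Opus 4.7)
The plan is to imitate Woodall's extremal construction and lift it to the directed setting by acyclifying all inter-part edges. Set $s=\lfloor(1-\alpha)n\rfloor$ and $k=\lfloor(1-\alpha)^{-1}\rfloor$, and partition $V$ arbitrarily into parts $V_1,\ldots,V_k,V_{k+1}$ with $|V_i|=s$ for $i\le k$ and $|V_{k+1}|=n-ks=w(\alpha)n+O(1)$. Let $G'\subseteq G$ consist of every directed edge of $G$ whose two endpoints lie in a common $V_i$, together with every directed edge $(u,v)\in E(G)$ for which $u\in V_i$, $v\in V_j$ with $i<j$.

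All inter-part edges of $G'$ run from a lower-indexed part to a higher-indexed one, so any directed cycle of $G'$ is entirely contained in a single $V_i$. Hence the longest directed cycle of $G'$ has length at most $\max_i|V_i|\le s\le(1-\alpha)n$, which is strictly less than $(1-\alpha+\eps)n$ for every fixed $\eps>0$ and $n$ large; this settles the cycle-length assertion.

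It remains to count edges. The $(p,r)$-pseudorandomness hypothesis, combined with $r=O(\sqrt{np})$ and $pn\to\infty$, yields $e_G(V_i,V_j)=(1+o(1))p|V_i||V_j|$ for all $i\ne j$ and $e_G(V_i)=(1+o(1))p|V_i|^2$ for all $i$. The within-part estimate is obtained by splitting each $V_i$ into two halves and iterating the pseudorandomness bound on equal-size disjoint pairs; the unequal-size pairs involving $V_{k+1}$ are handled by partitioning the larger set into chunks matching $|V_{k+1}|$. Using
\[
\sum_i|V_i|^2=\bigl[(1-w(\alpha))(1-\alpha)+w(\alpha)^2\bigr]n^2+O(n)=2\gamma n^2+O(n)
\]
(the final equality is the identity $2\gamma=1-(1-w(\alpha))(\alpha+w(\alpha))$) together with $\sum_{i<j}|V_i||V_j|=\tfrac12\bigl(n^2-\sum_i|V_i|^2\bigr)$ and $|E(G)|=pn^2$, a short calculation gives
\[
|E(G')|=\sum_i e_G(V_i)+\sum_{i<j}e_G(V_i,V_j)=(1+o(1))\bigl(\tfrac12+\gamma\bigr)|E(G)|.
\]
Removing at most $o(|E|)$ further edges yields a subgraph with exactly $(\tfrac12+\gamma)|E|$ edges, as required.

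The only mildly delicate point is that the pseudorandomness bound as stated requires disjoint sets of \emph{equal} size, whereas $e_G(V_i)$ concerns edges with both endpoints in the same set and the pairs involving $V_{k+1}$ have unequal sizes. Both issues are resolved by the routine partition-and-sum arguments indicated above, and the condition $pn\to\infty$ ensures the cumulative error stays negligible; nothing deeper than the hypothesis $r=O(\sqrt{np})$ is required.
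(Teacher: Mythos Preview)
Your proposal is correct and follows essentially the same approach as the paper: both partition $V$ into $k=\lfloor(1-\alpha)^{-1}\rfloor$ blocks of size $(1-\alpha)n$ plus a remainder block of size $w(\alpha)n$, keep all intra-block edges and all ``forward'' inter-block edges, and use pseudorandomness to estimate the edge count. The only cosmetic difference is bookkeeping: the paper computes the number of \emph{deleted} edges and shows it equals $(1-w(\alpha))(\alpha+w(\alpha))\tfrac{pn^2}{2}(1+o(1))$, whereas you compute $|E(G')|$ directly via the identity $\sum_i|V_i|^2=2\gamma n^2+O(n)$; these are algebraically equivalent, and your handling of the equal-size constraint in the pseudorandomness hypothesis (partition-and-sum) is a deterministic variant of the paper's averaging argument.
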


\paragraph{Our Tools.} One of the main tools we use in this work is a
sparse directed variant of Szemer\'{e}di's regularity lemma
(Lemma~\ref{lem:regularity}), that was stated in~\cite{DK02}. This
allows us to partition our graph to a constant number of regular
pairs, and essentially to reduce the problem to finding almost
spanning paths in regular pairs.

To this end, we use a simple yet powerful lemma that finds almost
spanning paths in expanding graphs
(Lemma~\ref{lem:long-path-in-expander}). In our case, a regular pair
is a bipartite expander is both directions. The approach is based on
ideas from~\cite{BBDK06,BB07,BKS10}.

\medskip

The rest of the paper is organized as follows. In
Section~\ref{sec:tools} we state the sparse directed regularity
lemma, and prove that regular pairs have an almost spanning directed
path. In Section~\ref{sec:proof-main}, we reduce the resilience
problem in directed graphs to undirected graphs, and then apply
ideas from~\cite{DKMS08}. In Section~\ref{sec:lower-bounds} we show
that our results are essentially tight.

Throughout the proofs we assume that the order of $G$, denoted by
$n$, is large enough. We do not try to optimize constants and omit
floor and ceil signs whenever these are not crucial.

\section{The regularity lemma for sparse directed graphs and long paths in regular pairs}
\label{sec:tools}

\subsection{The regularity lemma}

In this section we follow~\cite{DK02} and state a regularity lemma
for sparse directed graphs. We first provide some notation.

Given a directed graph $G=(V,E)$, for any pair of disjoint sets of
vertices $U,W$, we let $E_G(U,W)$ be the set of edges directed from
$U$ to $W$, and let $e_G(U,W) = |E_G(U,W)|$. We say that $G$ is
$(\delta,D,p)$-\textit{bounded} if for any two sets $U,W$ such that
$|U|,|W| \geq \delta |V|$ we have $$e_G(U,W) \leq Dp|U||W|.$$

The {\em edge density} from a set $U$ to a set $W$ is defined by
$\frac{e_G(U,W)}{|U||W|}$. We say that two sets $U$ and $W$ span  a bipartite directed graph
of {\em bi-density} $p$ if it has edge density at least $p$ in both
directions. Also define the {\em directed $p$-density} from $U$ to
$W$ by
$$d_{G,p}(U,W) = \frac{e_G(U,W)}{p|U||W|}.$$
We omit the index graph $G$ and write $d_p(U,W)$ whenever the base graph is clear from
the context.

For $0 < \delta \leq 1$, a pair $(U,W)$ is
$(\delta,p)$-\textit{regular} in a digraph $G$ if for every $U'
\subseteq U$ and $W' \subseteq W$ such that $|U'| \geq \delta  |U|$
and $|W'| \geq \delta |W|$ we have both
$$|d_{G,p}(U,W)-d_{G,p}(U',W')| < \delta,$$
and
$$|d_{G,p}(W,U)-d_{G,p}(W',U')| < \delta.$$

A partition $\mathcal P = \{V_0,V_1,\ldots,V_k\}$ of $V$ is
$(\delta,k,p)$-\textit{regular} if the following properties hold.
\begin{enumerate}
\item $|V_0| \leq \delta |V|$.
\item $|V_i|=|V_j|$ for all $1 \leq i < j \leq k$.
\item At least $(1-\delta){k \choose 2}$ of the pairs $(V_i,V_j), 1 \leq i < j \leq k$,
are $(\delta,p)$-regular.
\end{enumerate}

We will use the following variant of Szemer\'{e}di's regularity
lemma, that is stated in~\cite{DK02}, and its proof follows lines
similar to the proof of the regularity lemma for sparse graphs,
proved independently by Kohayakawa and by R\"{o}dl (see,
e.g.,~\cite{Kohayakawa97}). In~\cite{DK02} the lemma is stated for
oriented graphs (where no anti parallel edges are allowed), yet the
result can be easily adjusted to our case, where anti parallel edges
are allowed.

\begin{lem}[Lemma 3 in ~\cite{DK02}]
For any real number $\delta>0$, an integer $k_0 \geq 1$ and real
number $D>1$, there exist constants $\eta=\eta(\delta,k_0,D)$ and
$K=K(\delta,k_0,D)\geq k_0$ such that for any $0 < p(n) \leq 1$, any
$(\eta,D,p)$-bounded directed graph $G$ admits a
$(\delta,k,p)$-regular partition for some $k_0 \leq m \leq K$.
\label{lem:regularity}
\end{lem}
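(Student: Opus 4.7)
The plan is to run the standard Szemerédi-style energy-increment argument, in the form developed by Kohayakawa and Rödl for sparse graphs, and to carry the two orientations of each pair of parts through it independently. For a partition $\mathcal{P} = \{V_0, V_1, \dots, V_k\}$ of $V$, define the truncated, directed, $p$-normalized energy
$$q(\mathcal{P}) = \sum_{\substack{1 \le i,j \le k \\ i \ne j}} \frac{|V_i||V_j|}{|V|^2}\,\bigl(\min\{d_{G,p}(V_i,V_j),\, 2D\}\bigr)^2,$$
summed over ordered pairs, so that both orientations contribute. Truncation at $2D$ is indispensable in the sparse regime, where an untruncated $p$-density could be as large as $1/p$; on the other hand the $(\eta, D, p)$-boundedness hypothesis guarantees that pairs on which the truncation actually bites contain only a negligible fraction of the edges, and $q$ itself is uniformly bounded above by $(2D)^2$.

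Next I would establish the two standard ingredients. First, refinement of $\mathcal{P}$ cannot decrease $q$: the untruncated density is linear across a disjoint union, so convexity of $x \mapsto x^2$ gives monotonicity, and a short calculation using boundedness shows that the truncation at $2D$ is compatible with refinement. Second, a defect inequality: if $(V_i, V_j)$ fails to be $(\delta, p)$-regular in at least one of the two directions, then a $2 \times 2$ refinement $V_i = V_i^1 \cup V_i^2$, $V_j = V_j^1 \cup V_j^2$ witnesses an extra energy contribution of order $\delta^{4}\cdot |V_i||V_j|/|V|^2$. Summing over the at least $\delta\binom{k}{2}$ irregular pairs and taking a common refinement (into at most $4^k$ parts) yields a total energy gain of at least $\delta^{5}$ per step.

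The main loop is then standard: start from an arbitrary equipartition into $k_0$ parts with $V_0 = \emptyset$; at each stage, either the current partition is $(\delta, k, p)$-regular and we are finished, or we refine as above, increasing $q$ by at least $\delta^5$. Since $0 \le q \le (2D)^2$ throughout, the process terminates after at most $(2D)^2/\delta^5$ iterations, producing a partition whose number of parts is bounded by some $K = K(\delta, k_0, D)$; a cosmetic reshuffling at the end equalizes the part sizes and dumps the leftovers into a fresh exceptional class $V_0$ of size at most $\delta|V|$. The parameter $\eta$ is fixed small enough that the boundedness hypothesis is available on every sub-pair that appears during the iteration. The main obstacle, and the only genuinely new point beyond the undirected sparse and the dense directed versions, is checking that the $2D$-truncation is simultaneously harmless in both orientations — that is, that the tail of density above $2D$, summed over all partition pairs and both directions, is absorbed by $(\eta, D, p)$-boundedness and therefore spoils neither the monotonicity under refinement nor the defect inequality.
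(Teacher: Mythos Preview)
The paper does not actually prove this lemma: it is quoted as Lemma~3 of~\cite{DK02}, and the paper merely remarks that ``its proof follows lines similar to the proof of the regularity lemma for sparse graphs, proved independently by Kohayakawa and by R\"{o}dl.'' Your sketch is exactly that standard Kohayakawa--R\"{o}dl energy-increment argument, adapted to count both orientations of each pair, so it is entirely in line with what the paper points to; there is nothing to compare beyond the fact that you have written out what the paper leaves as a citation.
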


\subsection{Every regular pair contains a long path}

We next prove that every regular pair of positive bi-density contains
an almost spanning path. To this end, we first show a trivial
expansion property of regular pairs, and then apply this property to
prove the desired result.

\begin{clm}
Let $(U,W)$ be a $(\delta,p)$-regular pair for $|U|=|W|$ with
bi-density at least $2\delta p$, where $p>0$. Then for every two
sets $U' \subseteq U$ and $W' \subseteq W$ such that $|U'| \geq
\delta |U|$ and $|W'| \geq \delta |W|$ there is a directed edge from
$U'$ to $W'$. \label{clm:edge-between-small-sets}
\end{clm}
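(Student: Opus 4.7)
The plan is to deduce the claim directly from the definition of a $(\delta,p)$-regular pair, using the lower bound on the bi-density as the only nontrivial ingredient. Since the stated hypothesis is that the edge density from $U$ to $W$ is at least $2\delta p$, this is equivalent to saying $d_{G,p}(U,W) \geq 2\delta$.

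Concretely, I would proceed as follows. Fix any pair of subsets $U' \subseteq U$, $W' \subseteq W$ with $|U'|\geq \delta |U|$ and $|W'|\geq \delta |W|$. The $(\delta,p)$-regularity of the pair $(U,W)$ then applies to $(U',W')$, yielding
\[
\bigl|d_{G,p}(U,W) - d_{G,p}(U',W')\bigr| < \delta.
\]
Combined with $d_{G,p}(U,W) \geq 2\delta$, this immediately gives $d_{G,p}(U',W') > \delta > 0$. Since $d_{G,p}(U',W') = e_G(U',W')/(p|U'||W'|)$ and $p>0$, we conclude $e_G(U',W') > 0$, i.e., there is at least one directed edge from $U'$ to $W'$. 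The analogous statement from $W'$ to $U'$ follows identically from the bi-density assumption in the other direction if needed, but the claim only asks for the forward direction. There is no obstacle here; this is essentially a restatement of regularity as a positivity statement, and its role is purely to serve as a convenient lemma for the path-finding argument that follows.
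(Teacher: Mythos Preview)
Your proof is correct and is essentially identical to the paper's own argument: the paper writes $e_G(U',W') \geq (d_p(U,W)-\delta)p|U'||W'| \geq (2\delta-\delta)p|U'||W'| = \delta p |U'||W'| > 0$, which is exactly your regularity-plus-bi-density computation expressed in terms of edge counts rather than normalized densities.
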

\begin{proof}
By regularity we have $$e_G(U',W') \geq (d_p(U,W)-\delta)p|U'||W'|
\geq (2\delta-\delta)p|U'||W'| =\delta p  |U'||W'| >0.$$ The claim
follows.
\end{proof}

We next show that a bipartite directed graph with a simple expansion
property contains a long directed path. The proof follows ideas
from~\cite{BBDK06,BB07,BKS10}.

\begin{lem}
Let $H = (V_1,V_2,E)$, $|V_1| = |V_2| = t$, be a directed bipartite
graph that satisfies the following property: for every two sets $A
\subseteq V_1, B \subseteq V_2$ of size $k$, there is at least one
edge from $A$ to $B$ and there is at least one edge from $B$ to $A$.
Then $H$ contains a directed path of length $2t-4k+3$.
\label{lem:long-path-in-expander}
\end{lem}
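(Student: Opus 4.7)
I will argue by contradiction. Let $P = v_1 \to v_2 \to \cdots \to v_m$ be a longest directed path in $H$ and suppose for contradiction that $m - 1 < 2t - 4k + 3$, i.e., $m \leq 2t - 4k + 3$. Because $P$ alternates between $V_1$ and $V_2$, one has $|V_i \cap V(P)| \leq \lceil m/2 \rceil \leq t - 2k + 2$, hence
\[
|V_1 \setminus V(P)|,\ |V_2 \setminus V(P)| \geq 2k - 2.
\]
For $k \geq 2$ this is at least $k$; the case $k = 1$ forces $H$ to have edges in both directions between every pair $a \in V_1, b \in V_2$, and a Hamilton path of length $2t-1$ is immediate. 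Moreover, the maximality of $P$ gives $N^{\mathrm{in}}(v_1) \subseteq V(P)$ and $N^{\mathrm{out}}(v_m) \subseteq V(P)$.

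The main engine is a directed analogue of P\'osa's rotation. Given a longest path $Q = u_1 \to \cdots \to u_m$ with $u_1 = v_1$ and $V(Q) = V(P)$, the combination of a back-edge $u_m \to u_i$ together with a forward chord $u_{i-1} \to u_j$ (with $j > i$) yields the new longest path
\[
u_1 \to \cdots \to u_{i-1} \to u_j \to u_{j+1} \to \cdots \to u_m \to u_i \to u_{i+1} \to \cdots \to u_{j-1},
\]
which shares $V(P)$ as vertex set and $v_1$ as start, but has endpoint $u_{j-1}$. Let $\mathcal{E}$ denote the set of vertices appearing as the endpoint of some longest path obtained from $P$ by iterated such rotations. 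By the bipartite structure, all members of $\mathcal{E}$ lie on the same side of the bipartition (determined by the parity of $m$), and by maximality of each witness path we have $N^{\mathrm{out}}(e) \subseteq V(P)$ for every $e \in \mathcal{E}$.

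Granted $|\mathcal{E}| \geq k$, the proof concludes as follows: take $A \subseteq \mathcal{E}$ of size $k$ and a $k$-subset $B$ of the opposite side of the bipartition chosen inside $V \setminus V(P)$ (possible by the first paragraph). The expansion hypothesis then forces an edge from $A$ to $B$, contradicting $N^{\mathrm{out}}(\mathcal{E}) \cap (V \setminus V(P)) = \emptyset$. The proof therefore reduces to establishing $|\mathcal{E}| \geq k$.

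The main obstacle is this last step. Directed rotations are more restrictive than undirected P\'osa rotations because each requires \emph{two} chord edges simultaneously, making the bookkeeping more delicate. I would prove $|\mathcal{E}| \geq k$ by supposing $|\mathcal{E}| < k$, padding $\mathcal{E}$ arbitrarily to a $k$-subset in its side of the bipartition, and applying the two-sided expansion twice---first to force a back-edge from some $e \in \mathcal{E}$ into $V(P)$ not yet accounted for, and then to force a forward chord from the predecessor of its target---producing a rotation to a new endpoint necessarily lying outside $\mathcal{E}$, which contradicts the rotation-closedness of $\mathcal{E}$. A symmetric analysis at the starting vertex $v_1$ (using in-neighbors together with the opposite direction of the expansion hypothesis) can be combined with the above to exhaust the remaining cases.
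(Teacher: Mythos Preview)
Your approach via P\'osa-type rotations is genuinely different from the paper's argument. The paper runs DFS on $H$ and stops at the moment $|S|=|T|$ (completed vs.\ unvisited); since the stack $U$ is always a directed path and there are no arcs from $S$ to $T$, a short parity count in the bipartition shows that one of the pairs $(S\cap V_1,\,T\cap V_2)$ or $(S\cap V_2,\,T\cap V_1)$ would have both parts of size at least $k$ unless $|U|\ge 2t-4k+4$. This produces the long path in one stroke, with no rotation bookkeeping at all.

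Your plan, however, has a genuine gap at the step you yourself flag as the obstacle, the claim $|\mathcal{E}|\ge k$. The padding device does not do what you need: if you enlarge $\mathcal{E}$ to a $k$-set $\mathcal{E}'$ and apply the hypothesis, the guaranteed arc may issue from a padding vertex in $\mathcal{E}'\setminus\mathcal{E}$, so you cannot conclude that some $e\in\mathcal{E}$ has the desired back-edge. The same defect recurs at the second application (forcing a forward chord from the single predecessor $u_{i-1}$): padding $\{u_{i-1}\}$ to size $k$ again loses control of where the arc originates. More fundamentally, the hypothesis says nothing about individual vertices; an endpoint $e\in\mathcal{E}$ may have out-degree~$0$, so no rotation from $e$ is available and $\mathcal{E}$ can remain $\{v_m\}$. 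The symmetric start-set $\mathcal{S}$ suffers the same problem, and you give no mechanism for combining two small sets $\mathcal{S},\mathcal{E}$ into a contradiction. Finally, even when a rotation is available, you have not argued why the new endpoint $u_{j-1}$ must lie \emph{outside} $\mathcal{E}$.

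In short, the rotation framework may be salvageable, but it would require substantially more than padding and two invocations of the $k$-set hypothesis; the paper's DFS argument sidesteps all of this by manufacturing in one step a large pair with no arcs in one direction.
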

\begin{proof}
Recall that the DFS (Depth First Search) is a graph search algorithm
that visits all the vertices of a (directed or undirected) graph $G$
as follows. It maintains three sets of vertices,  letting $S$ be the
set of vertices which we have completed exploring, $T$ be the
set of unvisited vertices, and $U = V(G) \setminus (S \cup T)$,
where the vertices of $U$ are kept in a {\em stack} (a {\em last in,
first out} data structure). It is also assumed that some order
$\sigma$ on the vertices of $G$ is fixed, and the algorithm
prioritizes vertices according to $\sigma$. The DFS starts with
$S=\emptyset,U=\{\sigma_1\}$ and $T = V(G) \setminus \{\sigma_1\}$.

While there is a vertex in $V(G) \setminus S$, if $U$ is non-empty,
let $v$ be the last vertex that was added to $U$. If $v$ has an
out-neighbor $u \in T$, the algorithm inserts $u$ to $U$. If $v$ does not have an out-neighbor in $T$ then $v$ is
popped out from $U$ and is moved to $S$. If $U$ is empty, the
algorithm chooses an arbitrary vertex from $T$ and pushes it to $U$.

We now proceed to the proof of the lemma. We execute the DFS
algorithm for an arbtrary chosen order $\sigma$ on the vertices of the graph.
%, and fix the order $\sigma$ to be $u_1,u_2,\ldots$, where
%for every integer $i$ we have $u_{2i-1} \in V_1 $ and $u_{2i} \in
%V_2$.
We let again $S,T,U$ be three sets of vertices as defined
above. At the beginning of the algorithm, all the vertices are in
$T$, and at each step a single vertex either moves from $T$ to $U$
or from $U$ to $S$. At the end of the algorithm, all the vertices
are in $S$.

Consider the point during the execution of the algorithm when  $|S|
= |T|$. Observe crucially that all the vertices in $U$ form a
directed path, and we have $ \left| |U \cap V_1| - |U\cap V_2|
\right| \leq 1$. Since $|U|=2t-|S|-|T|=2t-2|S|$ is even, we have in fact
$$
|U\cap V_1|=|U\cap V_2|\,.
$$

%It follows that $$|S \cap V_1| + |S \cap V_2| = |T \cap V_1| + |T
%\cap V_2| = t-|U|/2,$$ and
%$$
% |S \cap V_1| + |T \cap V_1| =
%|S \cap V_2| + |T \cap V_2| \,.
%$$

We get that $$ |S| = |S \cap V_1| + |S \cap V_2| = |T \cap V_1| + |T
\cap V_2| = |T|,$$

and $$|V_1 \setminus U|=|S \cap V_1| + |T \cap V_1| = |S \cap V_2| + |T \cap V_2| =
|V_2 \setminus U|.$$ Hence, we get both $$ |S
\cap V_2| = |T \cap V_1| \,,$$ and $$ |S \cap V_1| = |T \cap
V_2|\,.$$

Assume without loss of generality that $$|S \cap V_1| \geq |S|/2
\geq |S \cap V_2|.$$ Then $|S \cap V_1| \geq t/2-|U|/4$ and
therefore $|T \cap V_2| \geq t/2-|U|/4$. Observe crucially that
there are no edges from $S$ to $T$. By the assumption of the lemma
we conclude $|S \cap V_1|, |T \cap V_2| \le k-1$ and therefore we
get $t/2-|U|/4 \le k-1$ and hence $|U| \ge 2t-4k+4$. Thus $H$
contains a directed path $|U|$ of length $2t-4k+3$, as desired.
\end{proof}

We therefore have the following corollary.
\begin{cor}
Let $(U,W)$ be a $(\delta,p)$-regular pair with bi-density at least
$2\delta p$ and $|U| = |W| = t$, $p>0$. Then the bipartite directed graph
between $U$ and $W$ contains a directed path of length $(1-2\delta)
\cdot 2t+2$ that starts at $U$.
\label{cor:regular-pair-has-long-path}
\end{cor}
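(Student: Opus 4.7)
The plan is to combine Claim~\ref{clm:edge-between-small-sets} with Lemma~\ref{lem:long-path-in-expander}, using the former to establish the expansion hypothesis required by the latter. The only delicate point is ensuring that the path produced starts at a vertex of $U$, which will be handled by a one-vertex adjustment at the endpoint.

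Concretely, I would first check that the bipartite subdigraph on $(U,W)$ satisfies the hypothesis of Lemma~\ref{lem:long-path-in-expander} with $V_1=U$, $V_2=W$, and $k=\delta t$. For any $A\subseteq U$ and $B\subseteq W$ with $|A|=|B|=\delta t$, Claim~\ref{clm:edge-between-small-sets} directly yields an edge from $A$ to $B$. For the reverse direction I would observe that the definition of a $(\delta,p)$-regular pair is symmetric in its two density inequalities, and that the bi-density hypothesis also gives $d_{G,p}(W,U)\geq 2\delta$; applying the same claim with the roles of $U$ and $W$ swapped then produces an edge from $B$ to $A$. This is enough to invoke the lemma.

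Next I would apply Lemma~\ref{lem:long-path-in-expander} to obtain a directed path of length $2t-4(\delta t)+3=2(1-2\delta)t+3$, which necessarily alternates between $U$ and $W$. If this path happens to start at a vertex of $U$ we are finished immediately, since its length already exceeds $2(1-2\delta)t+2$. Otherwise, its first vertex lies in $W$, and I would simply delete that vertex, leaving a directed path of length $2(1-2\delta)t+2$ whose starting vertex lies in $U$, exactly as required. The main ``obstacle'' is really just this bookkeeping at the endpoint: the lemma guarantees a long path but does not specify on which side of the bipartition it begins, so the only content beyond the two cited statements is the one-vertex correction. Floor/ceiling issues (replacing $\delta t$ by $\lceil\delta t\rceil$) are precisely the kind of nuisance the authors have already declared they will suppress.
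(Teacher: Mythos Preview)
Your proposal is correct and follows essentially the same route as the paper: invoke Claim~\ref{clm:edge-between-small-sets} to verify the expansion hypothesis of Lemma~\ref{lem:long-path-in-expander} with $k=\delta t$, obtain a path of length $2(1-2\delta)t+3$, and if it begins in $W$ delete the first vertex. The paper's proof is identical in substance, only phrased more tersely.
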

\begin{proof}
By Claim~\ref{clm:edge-between-small-sets}, there is an edge in each
direction between every two sets of size $\delta t$ in $U$ and $W$.
Therefore Lemma~\ref{lem:long-path-in-expander} implies the
existence of a directed path of length $(1-2\delta)2t+3$. Note that
if the first vertex in the path is from $W$ we may remove it, thus
getting a directed path of length at least $(1-2\delta)2t+2$ that
starts at $U$.
\end{proof}

\section{Proof of Theorem~\ref{thm:main-theorem}}
\label{sec:proof-main}

In this section we prove our main result.
%That is, every
%pseudorandom directed graph has large (global) resilience with
%respect to the property of having a long directed cycle.
Given a constant $\gamma>0$, we essentially want to prove that every
subgraph with $(1/2+\gamma)$-fraction of the edges of a pseudorandom
directed graph contains a long directed cycle. Let $\delta =
\delta(\gamma)$ to be fixed later, $K=K(\delta,1/\delta,1+\delta)$
and $\eta=\eta(\delta,1/\delta,1+\delta)$ be the constants defined
by the regularity lemma (Lemma~\ref{lem:regularity}).

Let $G = (V,E)$ be a $(p,r)$-pseudorandom directed graph with
$r \leq \mu\sqrt{np}$. For $\mu
\leq \delta \cdot \min\{\eta, 1/K\}$ we have
$$r \leq \delta \sqrt{np} \cdot \min\{\eta, 1/K\}.$$ Let $A,B$ be
two sets of vertices of size $\eta n$ in $G$. Observe that
$$r \sqrt{pn|A||B|} \leq \delta \sqrt{np}\, \eta \,\sqrt{p \eta^2 n^3}=\delta\eta^2n^2p\leq
\delta \eta ^2 n^2 = \delta |A||B|.$$
Therefore, we get that $G$ is
$(\eta,1+\delta,p)$-bounded.

Given a subgraph $G' = (V,E')$ of $G$ that contains
$(1/2+\gamma)|E|$ edges, our goal is to show that $G'$ contains a
long directed cycle.

Clearly, $G'$ is $(\eta,1+\delta,p)$-bounded as well, and hence we
can apply the sparse directed regularity lemma
(Lemma~\ref{lem:regularity}) to $G'$ and get a partition of $V$ to
clusters $V_0,V_1,\ldots,V_m$, where $1 / \delta \le m \leq K$,
$|V_0| \leq \delta n$, $|V_1|=|V_2|=\cdots=|V_m|=t$ and all but at
most $\delta$-fraction of the pairs $(V_i,V_j)$ are
$(\delta,p)$-regular. Note that $ \frac{n(1-\delta)}{m} \leq t \leq \frac{n}{m}$.

We next define an \textbf{undirected} auxiliary graph $H$ on the
clusters $V_1,\ldots,V_m$. With a slight abuse of notation, we
denote the vertices of $H$ by $V_1,V_2,\ldots,V_m$. Two vertices
$V_i$ and $V_j$ are connected if the pair $(V_i,V_j)$ is
$(\delta,p)$-regular and has bi-density at least $2\delta p$.

Since $G$ is $(p,r)$-pseudorandom and $r \leq \frac{\delta \cdot
\sqrt{np}}{K} \leq \frac{\delta \cdot \sqrt{np}}{m}$, we get that the edge density of every pair
$(V_i,V_j)$ in $G$ is at least $1-\delta$ and at most $1+\delta$.

Observe that if $V_i$ and $V_j$ are not connected by an edge in $H$,
one of the following must happen.

\begin{itemize}
\item The pair $(V_i,V_j)$ is not regular.
\item Either $|E_{G'}(V_i,V_j)| < 2\delta p|V_i||V_j|$ or $|E_{G'}(V_j,V_i)| < 2\delta p|V_i||V_j|$ . In other words, at least $(1-3\delta)p|V_i||V_j|$ of the edges from $V_i$ to $V_j$ or from $V_j$ to $V_i$ in $G$ are not in $G'$.
\end{itemize}

The number of non-regular pairs is at most $\delta {m \choose 2}$.
Since in every pair with bi-density less than $2 \delta p$
at least $(1-3\delta)p t^2$ edges were lost when moving from $G$ to $G'$, we get that the number of pairs
in $G'$ with edge density less than $2\delta p$ is bounded by
$$\frac{(1/2-\gamma) p n^2}{(1-3\delta)p t^2} \leq \frac{(1/2-\gamma)pn^2}{(1-3\delta)p(\frac{n(1-\delta)}{m})^2} = \frac{1/2-\gamma}{(1-3\delta)(1-\delta)^2} \cdot m^2.$$

Observe that by our choice of parameters for the regularity lemma we
have $m \geq \frac{1}{\delta}$, and hence we have
$$ m^2 \leq 2{m \choose 2}(1+2/m) \leq 2{m \choose 2}(1+2\delta).$$

We conclude that the fraction of non-edges in $H$ is bounded by
$$ \delta + \frac{(1-2\gamma)(1+2\delta)}{(1-3\delta)(1-\delta)^2}.$$

By taking $\delta \ll \gamma$, we get that the fraction of edges in
$H$ is at least $2\gamma (1-o(1))$, where the $o(1)$ term depends
only on $\delta$ and can be made arbitrary small.

Let $\alpha$ be the minimal solution of the equation
$$ 2\gamma = 1 - (1-w(\alpha))(\alpha+w(\alpha)),$$ where $w$ is the function defined in
Subsection~\ref{subsection:our-results}. By
Theorem~\ref{thm:Woodall-new}, we get that $H$ contains an
undirected cycle of length $(1-\alpha-o(1))m$.

We complete the proof by showing how to construct a long cycle in
$G'$ given a long cycle in $H$. We start with the case that $H$ contains
a long cycle of even length, and later show how to modify
the proof in the case of odd length.

Let $V_{i_1},V_{i_2},\ldots V_{i_{2b}}$ be a cycle of length
$(1-\alpha+o(1))m$ in $H$. Note that for every $1 \leq q \leq 2b$,
the pair $(V_{i_q},V_{i_{q+1}})$ (where we identify $V_{i_{2b+1}}$
with $V_{i_1}$) is $(\delta,p)$-regular and has edge bi-
density at
least $2\delta p$. Therefore, by
Corollary~\ref{cor:regular-pair-has-long-path}, for every $1 \leq q
\leq b$, there is a directed path $P_q$ of length at least
$(1-2\delta) \cdot 2t$ that alternates between $V_{i_{2q-1}}$ and
$V_{i_{2q}}$. For every $1 \leq q \leq b$, let $P_q^{R} = P_q \cap
V_{i_{2q}}$ and let $P_q^{L} = P_q \cap V_{i_{2q-1}}$.

Observe that for every $1 \leq q \leq b$, by
Claim~\ref{clm:edge-between-small-sets} there is an edge from the
last $\delta t$ vertices of $P_q^{R}$ to the first $\delta t$
vertices if $P^L_{q+1}$ (here we identify $P^L_{b+1}$ with
$P^L_{1}$). Thus we can paste every two consecutive paths together,
losing at most $2\delta t$ vertices from each path. We conclude that
we can paste all the paths together to get a directed cycle of
length
$$b \cdot (1-2\delta-\delta) \cdot 2t = (1-\alpha-o(1))mt= (1-\alpha-o(1))
|V(G')|,$$ as claimed.

Finally, suppose there is an odd cycle of length $(1-\alpha+o(1))m$
in $H$, whose vertices are by $V_{i_1},V_{i_2},\ldots V_{i_{2b+1}}$.
For every $1 \leq q \leq b$, let $P_q$ be a path of length at
least$(1-2\delta) \cdot 2t$ that alternates between $V_{i_{2q-1}}$
and $V_{i_{2q}}$. Moreover, let $$V'_{i_{2b+1}} = \{ v \in
V_{i_{2b+1}} : \forall u \in P^{R}_b, (u,v) \notin E \},$$

and

$$V''_{i_{2b+1}} = \{ v \in V_{i_{2b+1}} : \forall u
\in P^{L}_1, (v,u) \notin E \}.$$

By Claim~\ref{clm:edge-between-small-sets}, we have
$|V'_{i_{2b+1}}|, |V''_{i_{2b+1}}| \leq \delta t$, and therefore for
all but $2 \delta t$ of the vertices in $V_{i_{2b+1}}$ have an edge
from the last $2 \delta t$ vertices in $P_b$ and an edge to the
first $2 \delta t$ first vertices in $P_1$. Since $t > 2\delta t$ we
conclude that we can connect $P_b$ and $P_1$ through a vertex in
$V_{i_{2b+1}}$, thus getting a path of length
$$ b \cdot (1-2\delta-\delta)2t = (1-\alpha+o(1))
|V(G')|.$$
Theorem~\ref{thm:main-theorem} follows. \qed

\section{Lower bounds}
\label{sec:lower-bounds}

Let $G=(V,E)$ be a directed graph. Recall that by fixing a
permutation $\sigma$ on the vertices, we can partition the edges of
$G$ to two acyclic sets as follows. The first set contains all
directed edges $xy$ where $\sigma(x)>\sigma(y)$, and the second set
contains all directed edges $xy$ where $\sigma(y)>\sigma(x)$.
Therefore, the global resilience of every directed graph with
respect to the property of having directed cycles is at most $1/2$.
Here we extend this idea and show that our main result is
asymptotically tight.

\paragraph{Proof of Proposition~\ref{prop:lower-bound}.}
We show that there is a subgraph $G'$ with
$(1/2+\gamma)$-fraction of the edges, whose longest directed cycle is
of length at most $(1-\alpha+o(1))n$. Our approach
follows~\cite{DKMS08}.

Recall that $G$ is $(p,r)$-pseudorandom with $r \leq \mu \sqrt{np}$
and $pn \to \infty$. We first claim that for every two disjoint sets
$A,B$ of size $\Omega(n)$, the number of edges from $A$ to $B$ is
$p|A||B|(1+o(1))$. Indeed, let $|A| = an$ and $|B| = bn$, and
suppose that $a<b$. Let $B'$ be a random subset of $B$ of size $an$,
then by linearity of expectation the number of edges between $A$ and
$B'$ is $E(A,B) \cdot \frac{|B'|}{B}$. Therefore, if the number of
edges between $A$ and $B$ is smaller than (respectively, larger
than) from $p|A||B|(1+o(1))$ then there is a choice of a set $B'$
such that $A$ and $B'$ contradicts the $(p,r)$-pseudorandomness of
$G$.

We partition the vertices of $G$ into $k$ classes
$V_1,V_2,\ldots,V_k$ of size $(1-\alpha)n$ each, and one additional
class $V_{k+1}$ of size $w(\alpha) n \leq (1-\alpha)n$. Let $G'$ be
the subgraph with all edges from $V_i$ to $V_j$, for $1 \leq i < j
\leq k+1$, and all the edges within each class $V_i$. Clearly, $G'$
does not contain a cycle longer than $(1-\alpha)n$, since a directed
path leaving a certain $V_i$ cannot return there. Therefore, we can
conclude the proof by showing that the number of edges in $G'$ is as
claimed.

Indeed, since $G$ is $(p,r)$-pseudorandom and each $V_i$ is of size
$\Omega(n)$, we get that for every $1 \leq i <j \leq k$, the number
of edges from $V_j$ to $V_i$ is $(1-\alpha)^2 \cdot pn^2 (1+o(1))$.
The number of edges from $V_{k+1}$ to $\cup_{i \leq k} V_{i}$ in $G$
is $k(1-\alpha)w(\alpha) \cdot pn^2 (1+o(1))$. Recalling that by
definition of $w(x)$  we have $(1-\alpha)k = 1-w(\alpha)$, we get
that the number of edges we deleted from $G$ to get $G'$ is

\begin{align*}
& \left({k \choose 2}(1-\alpha)^2+k(1-\alpha)w(\alpha)\right) \cdot pn^2 (1+o(1)) \\ & = (1-\alpha)k((1-\alpha)(k-1)+2w(\alpha))(1+o(1))\frac{pn^2}{2}  \\
& = (1-w(\alpha))(\alpha-w(\alpha)+2w(\alpha))\cdot
\frac{pn^2(1+o(1))}{2}
\\ & = (1-w(\alpha))(\alpha+w(\alpha))+o(1))\frac{pn^2}{2}.
\end{align*}

Note that the number of edges in $G$ is $pn^2(1+o(1))$.
Let the number of edges in $G'$ be $(1/2 +\gamma) \cdot pn^2$.
Then the number of edges we deleted satsifies
$$(1+o(1))(1/2-\gamma)pn^2 =
(1-w(\alpha))(\alpha+w(\alpha))+o(1))\frac{pn^2}{2},$$ and therefore $\gamma$ satisfies
$$2\gamma =  1 - (1-w(\alpha))(\alpha+w(\alpha))+o(1)),$$
as claimed. The statement follows.  \qed

\section{Concluding remarks}

We studied the global resilience of pseudorandom directed graphs
with respect to the property of having a long directed cycle. We
gave matching lower and upper bounds, and our proof essentially
reduced our problem to case of undirected graphs.

A variety of questions regarding the resilience of directed graphs
can be asked. A few, somewhat arbitrary examples are the problem of
local resilience with respect to having a long directed cycle, the
resilience with respect to the property of having some fixed
directed graph. Another interesting problem is the resilience with
respect to Hamilitonicity, which in the dense case is settled
in~\cite{HSS10}.

In this work we considered subgraphs with $(1/2+\gamma)$-fraction of
the edges, and observed that every directed graph contains an
acyclic subgraph with $1/2$-fraction of the edges. In~\cite{BKS10},
the authors proved that every two-coloring of the edges of a
pseudorandom digraph contains a relatively long monochromatic path.
That is, instead of proving that a large subgraph has a certain
property, it is proved that every partition of the edges of the
graph has a certain property. It will be interesting to give such
results for other properties of directed graphs.

\end{document}